\begin{document}

\begin{frontmatter}[classification=text]

\title{A rainbow version of Mantel's Theorem}

\author[ra]{Ron Aharoni \thanks{Supported in part by the United States--Israel Binational Science Foundation (BSF) grant no. 2006099, the Israel Science Foundation (ISF) grant no. 2023464 and the Discount Bank Chair at the Technion.}}
\author[md]{Matt DeVos\thanks{Supported by an NSERC Discovery Grant (Canada).}}
\author[sghm]{Sebasti\'an Gonz\'alez Hermosillo de la Maza}
\author[am]{Amanda Montejano \thanks{Supported by CONACyT 219827, DGAPA: PASPA and PAPIIT IN116519.}}
\author[rs]{Robert \v{S}\'amal \thanks{Supported by grant 19-21082S of the Czech Science Foundation. 
  This project has received funding from the European Union’s Horizon 2020 research and innovation programme under the Marie Skłodowska-Curie grant agreement No 823748.}}

\begin{abstract}
Mantel's Theorem asserts that a simple $n$ vertex graph with more than~$\frac{1}{4}n^2$ edges has a triangle (three mutually adjacent vertices).  Here we consider a
  rainbow variant of this problem.  We prove that whenever $G_1, G_2, G_3$ are simple graphs on a common set of $n$ vertices and 
  $|E(G_i)| > ( \frac{ 26 - 2 \sqrt{7} }{81})n^2 \approx 0.2557 n^2$ for $1 \le i \le 3$, then there exist distinct vertices $v_1,v_2,v_3$ so that 
  (working with the indices modulo 3) we have $v_i v_{i+1} \in E(G_i)$ for $1 \le i \le 3$. We provide an example to show this bound is best possible.
  This also answers a question of Diwan and Mubayi.  We include a new short proof of Mantel's Theorem we obtained as a byproduct.
\end{abstract}
\end{frontmatter}



\newtheorem{theorem}{Theorem}[section]
\newtheorem{lemma}[theorem]{Lemma}
\newtheorem{corollary}[theorem]{Corollary}
\newtheorem{proposition}[theorem]{Proposition}
\newtheorem{observation}[theorem]{Observation}
\newtheorem{definition}[theorem]{Definition}
\newtheorem{claim}{Claim}
\newtheorem{conjecture}[theorem]{Conjecture}
\newtheorem{problem}[theorem]{Problem}

\section{Introduction}

Throughout we shall assume that all graphs are simple.  For a positive integer $r$ we let $K_r$ denote a complete graph on $r$ vertices and we let $K_{r,r}$ denote a balanced complete bipartite graph with $r$ vertices in each part.  A \emph{triangle} in a graph $G$ is a subgraph isomorphic to $K_3$.  The starting point for this work is the following classical theorem, one of the first results in extremal graph theory.  

\begin{theorem}[Mantel \cite{Mantel}]\label{mant}
If $G$ is a graph on $n$ vertices with $|E(G)| > \frac{1}{4}n^2$, then $G$ contains a triangle.  
\end{theorem}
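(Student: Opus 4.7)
The plan is to prove the contrapositive: I assume $G$ is triangle-free on $n$ vertices and show $|E(G)|\le \frac{1}{4}n^2$. The structural fact I would lean on is that, in a triangle-free graph, the neighborhood of every vertex is an independent set, and the natural place to exploit this is at a vertex of maximum degree.

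Concretely, first I would choose a vertex $v$ of maximum degree $\Delta$ and set $N = N(v)$, which has size $\Delta$. Since $G$ contains no triangle, $N$ spans no edges of $G$, so every edge of $G$ has at least one endpoint in the complement $V(G)\setminus N$, a set of size $n-\Delta$. Summing degrees over this complement and using that no vertex of $G$ has degree exceeding $\Delta$, I obtain
$$|E(G)| \;\le\; \sum_{u \in V(G)\setminus N} \deg_G(u) \;\le\; (n-\Delta)\,\Delta.$$

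Finally, an application of AM--GM (or a one-variable optimization in $\Delta$) gives $(n-\Delta)\Delta \le n^2/4$, with equality precisely when $\Delta = n/2$. Combined with the hypothesis this yields a contradiction, and so $G$ must contain a triangle.

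I do not anticipate a genuine obstacle here; the argument is quite short. The only point that requires a moment of care is the first inequality above: each edge is counted at least once in $\sum_{u\notin N}\deg_G(u)$ because it has at least one endpoint outside $N$, which is exactly why the sum upper-bounds $|E(G)|$ rather than $2|E(G)|$ or the number of edges inside $V(G)\setminus N$.
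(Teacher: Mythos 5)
Your argument is correct: in a triangle-free graph the neighborhood $N$ of a maximum-degree vertex is independent, so every edge is counted at least once in $\sum_{u\notin N}\deg_G(u)\le(n-\Delta)\Delta\le n^2/4$. This is the classical maximum-degree proof of Mantel's Theorem, and there is no gap in it. However, it is genuinely different from the proof in the paper, which is a matching-based counting argument: the authors take a maximal matching $M$ and show (Lemma~\ref{count}) that the set $P$ of vertex pairs with a common neighbour satisfies $|P|\ge|E(G)|-|M|$, via an injection $e\mapsto e\,\triangle\,e_{s(e)}$ from $E(G)\setminus M$ into $P$; since in a triangle-free graph $P$ and $E(G)$ are disjoint subsets of the $\binom n2$ pairs and $|M|\le n/2$, they get $2|E(G)|\le\binom n2+\frac n2$, i.e.\ $|E(G)|\le\frac14 n^2$. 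The authors include this particular proof deliberately, as they state, because the idea of analyzing the structure induced by pairs of matching edges is the engine of their proof of the rainbow Theorem~\ref{main} (via the digon collection $M$ and Lemma~\ref{no3pm}); your proof, while shorter to state and entirely standard, does not exhibit that technique. So: correct, but a different route that buys simplicity at the cost of the methodological point the paper is making.
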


To see that this bound is best possible, observe that when $n$ is even, the complete bipartite graph $K_{\frac{n}{2},\frac{n}{2}}$ has $\frac{n^2}{4}$ edges but no triangle.  

In this article we consider a colourful variant of the above.  Let $G_1, G_2, G_3$ be three graphs on a common vertex set $V$ and think of each graph as having edges of a
distinct colour. Define a \emph{rainbow triangle} to be three vertices $v_1, v_2, v_3 \in V$ so that $v_i v_{i+1} \in E(G_i)$ (where the indices are treated modulo~3).
We will be interested in determining how many edges force the existence of a rainbow triangle.  
Is it true that if $|E(G_i)| > \frac{1}{4}n^2$ for $1 \le i \le 3$,  then there exists a rainbow triangle? 
By taking $G_1 = G_2 = G_3$ we return to the setting of Mantel's Theorem.  
In general, however, the answer to the former question is negative, as shown by the following construction.

Let $n$ be an integer and let $0 < t < \frac{1}{2}$ have the property that $tn$ is an integer.  Let $V$ be a set of $n$ vertices, and partition $V$ into $\{A,B,C\}$ where $|B| = |C| = tn$ and $|A| = (1 - 2t)n$.  Construct three graphs $G_1, G_2, G_3$ on $V$ as follows:  Let $G_1$ consist of a clique on $A$ plus a clique on $B$, let $G_2$ consist of a clique on $A$ plus a clique on $C$, and let $G_3$ consist of all edges except for those with both ends in $A$ (see Figure~\ref{fig:ABC}).  
\begin{figure}
  \centering
  \includegraphics[width=180pt]{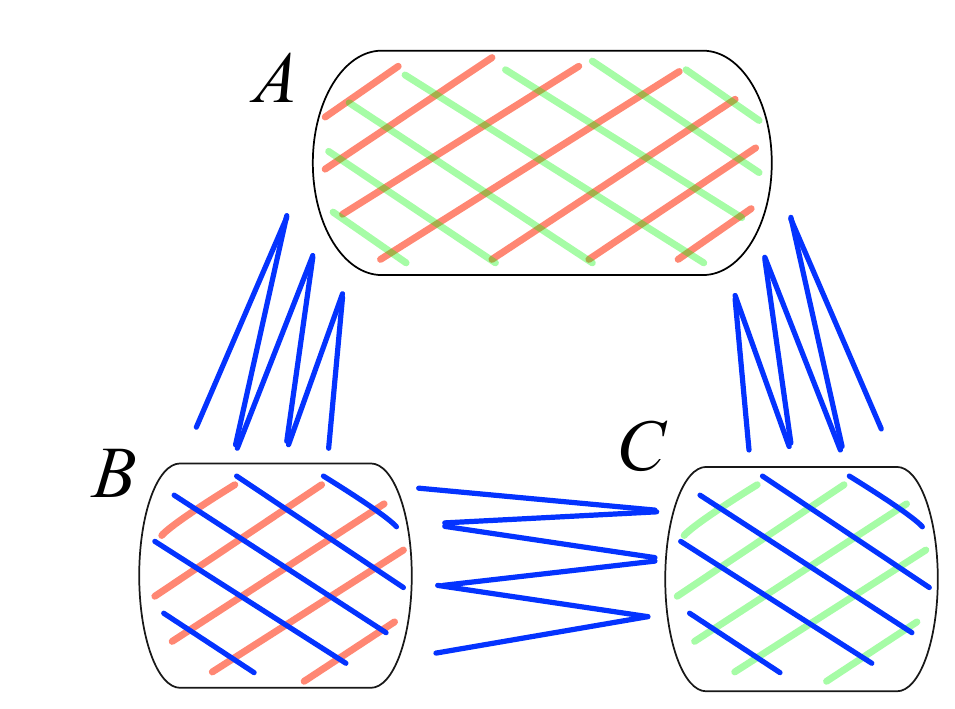}
  \caption{Graphs $G_1, G_2, G_3$ on $V=A\cup B\cup C$, with edges of $G_1$, $G_2$, $G_3$ depicted by colors red, green and blue respectively.}
  \label{fig:ABC}
\end{figure}
A simple check reveals that there is no rainbow triangle for this triple of graphs.  Furthermore 
$|E(G_1)| = |E(G_2)| = {n - 2tn \choose 2} + {tn \choose 2} = \frac{2-8t + 10t^2}{4}n^2 - \frac{1-t}{2}n$
while 
$|E(G_3)| = {n \choose 2} - {n - 2tn \choose 2}  = \frac{8t - 8t^2}{4} n^2 - tn$.

It is easy to verify that $2-8t + 10t^2>1$ and $8t - 8t^2>1$ whenever $t$ satisfies $\frac{1}{2}-\frac{1}{2\sqrt 2} < 0.147 < t < 0.155 < \frac{2-\sqrt{3/2}}{5}$. 
Thus, if we pick $0.147<t<0.155$, then for every sufficiently large $n$ (such that $tn$ is an integer) 
there are graphs $G_1, G_2, G_3$ on a common set of~$n$  vertices without a rainbow triangle that satisfy $|E(G_i)| > \frac{1}{4}  n^2$ for $1 \le i \le 3$. 
However, a slight increase in the number of edges forces the occurence of a rainbow triangle. 
Throughout the paper we fix the value $\tau = \frac{4 - \sqrt{7}}{9}$, so $\tau^2 \approx 0.0226$, and $\frac{1 + \tau^2}{4} =\frac{ 26 - 2 \sqrt{7} }{81} \approx 0.2557$.
\begin{theorem}
\label{main}
Let $G_1, G_2, G_3$ be graphs on a common set of $n$ vertices.  If $|E(G_i)| > \frac{1 + \tau^2}{4}n^2 $ for $1 \le i \le 3$, then there exists a rainbow triangle.  
\end{theorem}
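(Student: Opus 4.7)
The starting point is to translate the no-rainbow-triangle hypothesis into local inequalities. Throughout, I take indices mod $3$. Edge version: whenever $uv \in E(G_i)$, the sets $N_{i-1}(u)$ and $N_{i+1}(v)$ are disjoint, since any common neighbor $w$ would close a rainbow triangle on $u,v,w$; in particular $d_{i-1}(u) + d_{i+1}(v) \le n$. Vertex version: for every $v\in V$, the graph $G_i$ contains no edge with one end in $N_{i-1}(v)$ and the other in $N_{i+1}(v)$. The plan is to exploit this structural constraint vertex by vertex.

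Fixing $v$, I would partition $V$ into $V_1 = N_{i-1}(v)\cap N_{i+1}(v)$, $V_2 = N_{i-1}(v)\setminus N_{i+1}(v)$, $V_3 = N_{i+1}(v)\setminus N_{i-1}(v)$, and $V_4 = V\setminus(N_{i-1}(v)\cup N_{i+1}(v))$. A short case check shows that inside $N_{i-1}(v)\cup N_{i+1}(v)$ the only permitted $G_i$-edges are those lying entirely within $V_2$ or entirely within $V_3$; outside this set, all edges are permissible. Counting yields the clean bound
\[
|E(G_i)| \;\le\; \tfrac{1}{2}\bigl(n^2 + |N_{i-1}(v)\cap N_{i+1}(v)|^2\bigr) - d_{i-1}(v)\,d_{i+1}(v),
\]
valid for every $v$ and every $i$. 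Evaluating this in the extremal construction with $v \in A$ and $i = 3$ --- so that $N_1(v) = N_2(v) = A\setminus\{v\}$ --- the bound becomes an equality, with value $2\tau(1-\tau)\,n^2$. The equation $2\tau(1-\tau) = \tfrac{1+\tau^2}{4}$, equivalent to the quadratic $9\tau^2 - 8\tau + 1 = 0$, is exactly what defines $\tau$, confirming that $\tfrac{1+\tau^2}{4}n^2$ is the correct threshold.

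To conclude, my plan is to argue that under the hypothesis $|E(G_i)| > \tfrac{1+\tau^2}{4}n^2$ for every $i$, the above bound must fail for some $v$ and $i$, yielding a contradiction. The hard part will be that no single ``bad'' vertex suffices: even in the extremal construction the inequality is tight only for $v\in A$ with $i=3$, while vertices $v\in B\cup C$ yield trivially weak estimates. So one must show that the hypothesis forces the graphs into an approximate $A\cup B\cup C$ structure with the right proportions. I would combine the local bound (applied for every $v$ and every $i$) with the elementary estimate $|N_{i-1}(v)\cap N_{i+1}(v)|^2 \le d_{i-1}(v)\,d_{i+1}(v)$, and then either (a) average carefully over $v$, or (b) apply a Zykov-style symmetrization replacing dominated vertices by copies of better ones while preserving absence of a rainbow triangle --- both aimed at reducing the problem to a finite-dimensional optimization over the part sizes of a canonical tripartition. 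In that optimization the quadratic $9\tau^2 - 8\tau + 1 = 0$ should arise as the Lagrange condition, tying the threshold together. Making either route rigorous is where the bulk of the work lies.
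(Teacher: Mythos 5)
Your local inequality is correct and is a genuinely nice observation: for every $v$ and $i$ the absence of a rainbow triangle forbids all $G_i$-edges between $N_{i-1}(v)$ and $N_{i+1}(v)$, and the resulting bound $|E(G_i)| \le \tfrac12\bigl(n^2 + |N_{i-1}(v)\cap N_{i+1}(v)|^2\bigr) - d_{i-1}(v)d_{i+1}(v)$ is tight on the extremal configuration, with $9\tau^2-8\tau+1=0$ emerging exactly as it should. But as you concede, the proof stops there, and the missing step is the entire difficulty. Neither of your two proposed routes works as described. For route (a), averaging over $v$: the bound is only tight for the roughly $(1-2\tau)n$ vertices of $A$ and is badly lossy on $B\cup C$ (about $31\%$ of the vertices), and the natural way to aggregate, $\sum_v d_{i-1}(v)d_{i+1}(v)\ge \tfrac{4|E_{i-1}||E_{i+1}|}{n}$, is a Chebyshev-type inequality that fails when the degree sequences of $G_{i-1}$ and $G_{i+1}$ are anticorrelated --- which is precisely what happens in the extremal example ($B$-vertices have $d_2=0$, $C$-vertices have $d_1=0$). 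For route (b), Zykov symmetrization: the standard step replaces $u$ by a twin of $w$ when $w$ dominates $u$, but here you would need domination in all three graphs simultaneously; worse, symmetrization of this kind drives the configuration toward a blow-up of a small pattern (independent sets inside parts), whereas the extremal configuration has \emph{cliques} inside $A$, $B$, $C$ and is not a blow-up of any triangle-free pattern, so the process cannot terminate at the right object.

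For comparison, the paper's proof takes a different route entirely: it passes to a minimal counterexample of a strengthened statement bounding $|E(G_i)|+|E(G_j)|$ pairwise (which survives blow-ups), shows no vertex subset can carry a perfect matching in all three colours (Lemma~\ref{no3pm}), builds a maximal collection of disjoint ``digons'' to partition $V$ into sets $X_{1,2}, X_{1,3}, X_{2,3}, D$, cleans up the edges between digons using a symmetrization lemma (Lemma~\ref{bipman}, applied to an auxiliary graph on the digons rather than to the $G_i$ themselves), and finally derives three quadratic inequalities in the part proportions $a,b,c,d$ that are shown to be inconsistent --- this last stage is the finite-dimensional optimization you anticipated, but reaching it requires the digon machinery you have not supplied. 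In short: your setup is sound and correctly calibrated, but the argument has a genuine gap at its core.
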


Only after finishing the paper we have learned about work of A.~Diwan and D.~Mubayi~\cite{dhruv} (see also \url{https://faculty.math.illinois.edu/~west/regs/turancol.html}). 
They consider two-colored variants of Tur\'an's theorem, prove a couple of them and pose a problem about three-colored version of Mantel's theorem. Thus, the above 
theorem is an asymptotically tight solution to their problem. 

Theorem~\ref{main} is sharp in the sense that $\tau^2$ cannot be replaced by a smaller constant. To see this, note that $t = \tau$ is the unique solution to the quadratic
equation $2 - 8t + 10t^2 = 8t - 8t^2$ with $0 < t < \frac{1}{2}$.  For this number $\tau$ both sides of this quadratic equation are equal to $1 + \tau^2$.  It follows
by the construction  showed above (taking $n$ large and $t$ close to $\tau$) that for every $\epsilon > 0$ there exist simple graphs $G_1, G_2, G_3$ on a common set
of~$n$ vertices without a rainbow triangle that satisfy $|E(G_i)| > (\frac{1 + \tau^2}{4} - \epsilon) n^2$ for $1 \le i \le 3$.  

We were also able to use some of the ideas in our proof to obtain a new short proof of Mantel's Theorem. We include this proof at the beginning of the main section since it provides a nice example of a technique later used to prove Theorem \ref{main}.

Since $\tau^2$ is not rational, there does not exist a graph $G$ with $|V(G)| = n$ and $|E(G)| = \frac{1 + \tau^2}{4}n^2$, and thus there is no finite tight
example for our problem. This inconvenience is removed in the setting of graph limits and graphons: a growing sequence constructed as in the previous
paragraph would converge to three graphons, each with density $\frac{1 + \tau^2}{2}$ and without a rainbow triangle. In this setting, Razborov's flag algebra machinery may give an alternative proof of our result,
and be useful in extending it. Indeed, a flag-algebra proof has already been obtained (independently from us) by E.~Culver, B.~Lidick\'y, F.~Pfender, and J.~Volec~\cite{flags-manu}. 
Their proof even gives a precise characterization of all extremal configurations with sufficiently large number of vertices. 
To further explore this area of ``rainbow extremal graph theory'' each approach has its pluses and minuses, ours has the advantage of being verifiable by hand. We suggest some potential interesting directions to proceed with the following problems.

\begin{problem}
For what real numbers $\alpha_1, \alpha_2, \alpha_3 > 0$ is it true that every triple of graphs $G_1, G_2, G_3$ satisfying $|E(G_i)| > \alpha_i n^2$ 
must have a rainbow triangle?
\end{problem}

Tur\'an's Theorem generalizes Mantel's Theorem by proving that for every integer $r \ge 2$, a simple $n$ vertex graph with more than $(1 - \frac{1}{r-1}) \frac{n^2}{2}$
edges has a $K_r$ subgraph. Analogously, one may consider the following.

\begin{problem}
For every positive integer $r$, what is the smallest real number $\delta_r$ so that whenever $G_1, \ldots, G_{ {r \choose 2} }$ are graphs on a common set of $n$ vertices
with $|E(G_i)| \ge \delta_r n^2$ for every $1 \le i \le {r \choose 2}$ there exists a rainbow $K_r$, i.e., a set of $r$ vertices and one edge from each $G_i$ that
together form a clique on this set of vertices.
\end{problem}

We can also consider this problem for the number of graphs (colours) being different from~$\binom r2$, with an appropriately modified notion of ``rainbow''.  
For $r=3$ and more than three graphs the answer is $\delta_r = 1/4$ with the extremal configuration beeing all graphs identical complete biparite 
(Theorem 1.2 of~\cite{KSSV}). 
When the number of graphs is less than~$\binom r2$, one can  study the existence of other colour patterns. For $r=3$ and
two graphs a problem with such a flavor was considered in ~\cite{DMM}.  

We will finish this section by a sample of other results and conjectures that can be described as rainbow. 
Perhaps historically first is a result of B\'ar\'any~\cite{Barany} in combinatorial geometry. 
He obtained a rainbow (also termed colourful) version of Carath\'eodory's theorem; see also~\cite{GKblog}.

More recent, and closer to our present topic, is the study of rainbow Erd\H{o}s-Ko-Rado theorems. 
Let us use $f(n,r,k)$ to denote the EKR number -- the smallest $m$ such that every $r$-uniform hypergraph with $n$~vertices and $m$~edges 
has a matching of size~$k$. (Recall that the classical Erd\H{o}s-Ko-Rado theorem states that $f(n,r,2) = \binom{n-1}{r-1}$ whenever $n \ge 2r$.) 
Aharoni and~Howard~\cite{rainbowEKR} conjecture the following rainbow version: 

\begin{conjecture}[Aharoni, Howard] 
  Let $H_1$, \dots, $H_k$ be $r$-uniform hypergraphs on the same set of~$n$~vertices, each having $f(n,r,k)$ hyperedges. 
  Then there is a rainbow matching: a matching $\{e_1, \dots, e_k\}$ such that $e_i \in E(H_i)$ for $i=1, \dots, k$. 
\end{conjecture}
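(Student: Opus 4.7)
The plan is to combine induction on $k$ with a stability analysis of the extremal configurations at the EKR threshold $f(n,r,k)$. Recall that the extremal $r$-uniform hypergraphs with no $k$-matching are (conjecturally, and in many ranges provably) either the \emph{star}, consisting of all $r$-sets meeting a fixed $(k-1)$-set, or the \emph{clique}, consisting of all $r$-subsets of a fixed $(rk-1)$-set. A hypergraph with only $f(n,r,k)$ edges is either close in structure to one of these or else has a ``robust'' $k$-matching, and I would try to leverage this dichotomy simultaneously across all $k$ colours.

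The base case $k=2$ I would treat directly: given $r$-uniform $H_1, H_2$ each with $f(n,r,2)$ edges, a pairing/shifting argument in the spirit of Katona's cycle method should yield disjoint $e_1 \in E(H_1)$, $e_2 \in E(H_2)$. For the inductive step, I would try to select an edge $e \in E(H_k)$ such that each of $H_1, \dots, H_{k-1}$, restricted to $V \setminus e$, still has at least $f(n-r, r, k-1)$ edges; induction on the smaller ground set then yields a rainbow $(k-1)$-matching in $H_1, \dots, H_{k-1}$ disjoint from $e$, and appending $e$ completes the construction. The candidate $e$ would be chosen to minimise the number of edges of $H_1 \cup \cdots \cup H_{k-1}$ it meets, and a weighted double count comparing $\sum_i |E(H_i)|$ with $\binom{n}{r}$ should make the budget balance in the ``generic'' case.

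The main obstacle is the structured regime in which this budget fails, namely when several $H_i$ sit near the extremal configurations. In a mixed scenario, some $H_i$ might be a near-star while others are near-cliques, and the ``good edge'' strategy breaks down because every edge of $H_k$ meets many edges of some other $H_j$. In the purely star-like case the problem degenerates to a rainbow system-of-distinct-representatives question for the $(k-1)$-sets around which the stars are centred, which I would attempt to resolve via a defect Hall-type argument or the topological connectivity machinery of Aharoni and Berger for independence complexes of matchings. Handling the full mixed case, and proving stability with constants sharp enough to combine cleanly with induction, looks to be the essential new difficulty; this is presumably why the conjecture has resisted resolution in general, and where a genuinely new idea is most likely required.
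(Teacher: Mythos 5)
This statement is an open conjecture of Aharoni and Howard; the paper does not prove it, and only cites a proof of a special case (balanced $r$-partite hypergraphs with $r=3$) from the reference \cite{rainbowEKR}. Your proposal is therefore being measured against no proof at all, and it is not itself a proof: you candidly leave the decisive cases unresolved, so what you have is a research plan with genuine gaps rather than an argument with a fixable flaw.

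To name the gaps concretely. First, your stability dichotomy presupposes knowing the extremal $r$-uniform families with no $k$-matching (star versus clique); this is the Erd\H{o}s matching conjecture, itself open in general, so the foundation of the case analysis is unavailable except in restricted ranges of $n,r,k$. Second, the inductive step's budget genuinely fails, not just in pathological cases: if $H_1,\dots,H_{k-1}$ are all stars around the same $(k-1)$-set $S$, then every edge $e$ of $H_k$ that one might delete removes on the order of $r\binom{n}{r-1}$ edges from each $H_i$ whenever $e$ meets $S$, while the slack $f(n,r,k)-f(n-r,r,k-1)$ is only on the order of $\binom{n}{r-1}$; and when $e$ avoids $S$ entirely one must still verify that the truncated stars on $V\setminus e$ clear the threshold $f(n-r,r,k-1)$, which is exactly the tight case. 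Third, the reduction of the purely star-like regime to a ``rainbow SDR'' problem and the invocation of topological connectivity of matching complexes is stated as an intention, not carried out; the Aharoni--Berger machinery gives bounds in terms of connectivity that would themselves have to be computed for these specific complexes. Since you yourself conclude that ``a genuinely new idea is most likely required,'' the honest assessment is that the proposal identifies the right obstacles but does not overcome any of them, which is consistent with the statement remaining a conjecture.
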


In~\cite{rainbowEKR} this conjecture is discussed for hypergraphs that are balanced $r$-partite and it is proved there with this restriction for $r=3$. 
We finish with a conjecture motivated by Dirac's condition for hamiltonicity. 

\begin{conjecture}[Aharoni] 
  Given graphs $G_1$, \dots, $G_n$ on the same vertex set of size~$n$, each having minimum degrees at least $n/2$, there exists a rainbow Hamilton cycle: 
  a cycle with edge-set $\{e_1, \dots, e_n\}$ such that $e_i \in E(G_i)$ for $i=1, \dots, n$.
\end{conjecture}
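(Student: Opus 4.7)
The plan is to attack this conjecture via a rainbow adaptation of the absorbing method, which has been effective for related rainbow Hamiltonicity questions. As a first step, I would sample a small random reservoir $R \subseteq \{1,\dots,n\}$ of color indices, say $|R| = \varepsilon n$, setting the graphs $\{G_i : i \in R\}$ aside for absorption and leaving the remaining graphs for the main cover. With high probability, the minimum degree hypothesis $\delta(G_i) \geq n/2$ is inherited essentially intact by both subsystems.

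\emph{Building a universal absorber.} For each vertex $v \in V$, I would use the Dirac-type hypothesis to locate many short rainbow gadgets in the reservoir colors $R$ capable of incorporating $v$: a basic template is a triple $(x,y;c,c',c'')$ with $xy \in E(G_c)$, $xv \in E(G_{c'})$, $vy \in E(G_{c''})$, and $c, c', c'' \in R$ distinct, with $v \notin \{x,y\}$. Replacing the edge $xy$ by the path $xvy$ swaps the color $c$ out of the used set and $c',c''$ in; by refining the template slightly (e.g.\ pairing two such gadgets so that the exchange is color-neutral) one can make the absorption preserve the color profile. Since every vertex lies in a positive fraction of such gadgets and distinct gadgets can be chosen vertex- and color-disjoint, a standard probabilistic deletion argument should yield a single short rainbow path $A$, on at most a small multiple of $\varepsilon n$ vertices, such that any set $L \subseteq V \setminus V(A)$ with $|L|$ sufficiently small can be absorbed into $A$, producing a rainbow path on $V(A) \cup L$ with the same endpoints and the same color set.

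\emph{Covering and closing.} With $A$ fixed, I would work inside $V \setminus V(A)$ using colors $\{1,\dots,n\} \setminus R$. A greedy or rotation-extension argument in the spirit of P\'osa, supported by the high minimum degree in every $G_i$ (which leaves plenty of slack even after $\Theta(n)$ edges are forbidden at each vertex), should produce a rainbow path $Q$ through all but a leftover set $L$ of $o(n)$ vertices, with endpoints chosen so that two unused reservoir colors can join $Q$ to the endpoints of $A$. Concatenating $A$ with $Q$, absorbing $L$ into $A$, and closing up via the two reserved colors then yields the rainbow Hamilton cycle.

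\emph{Main obstacle.} The delicate point, and in my view the true heart of the conjecture, is the construction of the universal absorber. Classical absorbers rely on interchangeability of edges within a fixed graph; here each absorbing move alters the multiset of colors currently used by $A$, so insertions of different leftover vertices can conflict combinatorially even when they are vertex-disjoint. Designing absorbing templates whose color profile is preserved under insertion, and showing via the $n/2$-degree hypothesis that every vertex lies in enough such templates to survive the probabilistic sparsification, is where I would expect the bulk of the technical work — and, given that the conjecture remains open, is plausibly the main obstacle to settling it.
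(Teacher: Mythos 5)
The statement you are addressing is not a theorem of the paper but an open conjecture (attributed to Aharoni) that the authors state without proof; there is no argument in the paper to compare yours against. More importantly, what you have written is a research plan rather than a proof, and you say as much yourself: the construction of a universal absorber whose \emph{color} profile is preserved under insertion is left entirely open, and you identify it as ``plausibly the main obstacle to settling'' the conjecture. A proposal whose central lemma is acknowledged to be unproven cannot be accepted as a proof of the statement.

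Beyond the admitted gap, two concrete points in your outline would fail as written. First, the hypothesis $\delta(G_i)\ge n/2$ sits exactly at the Dirac threshold, with no slack whatsoever: Dirac's theorem is tight, and a graph with minimum degree $n/2-1$ need not be Hamiltonian even in the uncolored setting. So your claim that setting aside an $\varepsilon n$-fraction of colors for a reservoir, and later forbidding $\Theta(n)$ edges at each vertex during the cover phase, still ``leaves plenty of slack'' is false at face value; any absorption-based argument here must be engineered to lose essentially nothing from the degree condition, which is a genuine difficulty, not a routine verification. Second, the absorbing method is inherently asymptotic --- it produces a threshold $n_0$ below which nothing is claimed --- whereas the conjecture is stated for every $n$. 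Even a fully executed version of your plan would only establish the conjecture for sufficiently large $n$ (and, given the tightness of the degree bound, more plausibly only a version with minimum degree $(1/2+o(1))n$). The rotation--extension step for rainbow paths, where each color may be used at most once globally, is likewise asserted rather than carried out. In short: the strategy is a reasonable direction, but none of its three components (absorber, cover, closure) is established, and the statement remains a conjecture.
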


\section{Proof}

A key idea in the proof of Theorem \ref{main} will be to analyze the structure of the subgraphs induced by pairs of edges in a matching of a graph. We can use a similar (but simpler) approach to obtain a short proof of Mantel's Theorem.

\begin{lemma}\label{count}
	Let $G$ be a graph and $P$ be the set of pairs of distinct vertices $\left\lbrace x,y\right\rbrace \subseteq V(G)$ such that $N(x) \cap N(y) \neq \emptyset$. If $M$ is a maximal matching in $G$, then~$\left| P\right| \geq  \left| E(G) \right| - \left| M \right|$. 
\end{lemma}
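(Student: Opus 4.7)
I would try to build an explicit injection $\phi \colon E(G)\setminus M \to P$; the desired bound $|P|\ge |E(G)|-|M|$ then follows at once.

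The starting observation is that, by the maximality of $M$, every edge $e \in E(G)\setminus M$ has at least one endpoint in $V(M)$ (otherwise $M\cup\{e\}$ would still be a matching). Write $M=\{a_1b_1,\dots,a_mb_m\}$, set $\mathrm{ind}(v)=i$ for $v\in\{a_i,b_i\}$, and let $v^{*}$ denote the partner of $v\in V(M)$ inside $M$. For each edge $e=uv\in E(G)\setminus M$ I would relabel its endpoints so that either (i) $u\in V(M)$ and $v\notin V(M)$, or (ii) $u,v\in V(M)$ with $\mathrm{ind}(u)<\mathrm{ind}(v)$. Case (ii) is well-posed, since $\mathrm{ind}(u)=\mathrm{ind}(v)$ with $u\neq v$ would force $\{u,v\}=\{a_i,b_i\}\in M$. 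Now define $\phi(e)=\{u^{*},v\}$. The two entries are distinct (otherwise $v=u^{*}$ would make $e\in M$), and $u$ is a common neighbor of $u^{*}$ and $v$ (via the matching edge $uu^{*}$ and via $e$), so $\phi(e)\in P$.

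The key step is injectivity. Given a pair $\{x,y\}\in P$, I would check preimages case by case on how $\{x,y\}$ meets $V(M)$. If exactly one of $x,y$ lies in $V(M)$, say $y\in V(M)$ and $x\notin V(M)$, then any preimage $e=uv$ must come from case~(i) with $v=x$ and $u^{*}=y$, so $u=y^{*}$, giving the unique candidate $e=y^{*}x$. If both $x,y\in V(M)$ (necessarily with $\mathrm{ind}(x)\neq\mathrm{ind}(y)$, else $\{x,y\}\in M$ and no preimage exists), the tie-breaking rule $\mathrm{ind}(u)<\mathrm{ind}(v)$ forces $v$ to be the endpoint of larger index and $u^{*}$ to be the other one, so $u$, and hence $e$, is uniquely determined. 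No pair with both endpoints outside $V(M)$ can be hit, since the defining construction always has $u^{*}\in V(M)$.

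The main obstacle worth flagging is precisely this tie-breaking. Without the ordering rule, a pair $\{x,y\}\subseteq V(M)$ with $xy$ not an edge of $M$ could naively be hit both by $x^{*}y$ and by $xy^{*}$, making the map $2$-to-$1$ and losing a factor of two in the bound. Using $\mathrm{ind}$ to choose one of these canonically is what rescues injectivity; the rest is bookkeeping.
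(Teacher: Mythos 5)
Your injection $\phi(e)=\{u^{*},v\}$ is exactly the paper's map $f(e)=e\,\triangle\,e_{s(e)}$ (your canonical choice of $u$ picks out precisely the lowest-indexed matching edge meeting $e$), so the proof is correct and takes essentially the same route as the paper. You merely spell out the injectivity check, which the paper leaves as "easy to see."
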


\begin{proof}
	
	Let $M = \left\lbrace e_1, e_2, \dots, e_k \right\rbrace$ be a maximal matching of $G$. Since $M$ is maximal, we know that every edge $e \in E(G)$ has at least one endpoint in common with an edge of $M$. For $e \in E(G) \setminus M$, let $s(e)$ be the smallest integer such that $e \cap e_{s(e)} \neq \emptyset$, and take $f(e) = e \triangle e_{s(e)}$. It is easy to see that $f:E(G) \setminus M \to P$ is an injective function, and the result follows.
\end{proof}

\begin{proof}[Proof of Theorem \ref{mant}]
	Let $G$ be a triangle-free graph and $M$ a maximum matching of $G$. Since $G$ has no triangles then $ \left| P \right| + \left| E(G) \right| \leq \binom{n}{2}$, and by Lemma \ref{count} we have $\left| E(G)\right| - \frac{1}{2}n \leq  \left| E(G) \right| - \left| M \right| \leq \left| P\right|$. By combining these inequalities, we get $2\left| E(G) \right| \leq \binom{n}{2} + \frac{1}{2}n$, and so $\left| E(G) \right| \leq \frac{1}{4}n^2$.
\end{proof}

There are a great many proofs of Mantel's Theorem and we borrow ideas from a few. In particular, the following lemma we require is a variant of an ``entropy minimizing''
proof (\cite{Erdos-Turan}, see also \cite{Aigner-Turan}). 
For a graph $G$ and a set $X \subseteq V(G)$ we let $G[X]$ denote the subgraph of $G$ induced on $X$ and we let $e_G(X) = |E(G[X])|$.  If $Y \subseteq V(G)$
is disjoint from~$X$ we let $e_G(X,Y) = | \{ xy \in E(G) \mid \mbox{$x \in X$ and $y \in Y$} \}|$.  As usual, if the graph $G$ is clear from context, we drop
the subscript $G$.

\begin{lemma}
\label{bipman}
Let $G$ be a graph and let $\{Z_0, Z_1\}$ be a partition of $V(G)$.  If for $i=0,1$, every $z \in Z_i$ has the property that $N(z) \cap Z_{1 - i}$ is a clique, then 
\[ e(Z_0,Z_1) \le e(Z_0) + e(Z_1) + \tfrac{1}{2} \left( |Z_0| + |Z_1| \right). \]
\end{lemma}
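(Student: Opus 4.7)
The plan is to apply Lemma~\ref{count} to the bipartite subgraph of $G$ formed by the edges crossing the partition, and then use the clique hypothesis to interpret pairs with a common neighbor as edges inside one of the two parts.

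More precisely, let $B$ denote the bipartite graph on vertex set $Z_0 \cup Z_1$ whose edges are exactly those of $G$ with one endpoint in each part, so $|E(B)| = e(Z_0, Z_1)$. First, I would take a maximal matching $M$ in $B$ and set $k = |M|$. Because the vertices covered by $M$ lie in $V(G)$, we immediately get $2k \le |Z_0| + |Z_1|$. Applying Lemma~\ref{count} to the graph $B$ produces the set $P_B$ of unordered pairs of distinct vertices that share a common neighbor in $B$, together with the inequality
\[
  |P_B| \;\ge\; |E(B)| - |M| \;=\; e(Z_0,Z_1) - k.
\]

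Next, I would translate $P_B$ into edges inside the parts. Since $B$ is bipartite, a common neighbor of $x$ and $y$ in $B$ forces $x, y$ to lie in the same side of the partition. If $\{x,y\} \in P_B$ with $x, y \in Z_0$, pick a common neighbor $w \in Z_1$ in $B$; then $x, y \in N(w) \cap Z_0$, which is a clique in $G$ by hypothesis, so $xy \in E(G[Z_0])$. The symmetric statement holds for pairs in $Z_1$. Consequently $P_B \subseteq E(G[Z_0]) \cup E(G[Z_1])$, and in particular $|P_B| \le e(Z_0) + e(Z_1)$. Combining this with the previous inequality and the bound $k \le (|Z_0|+|Z_1|)/2$ yields the claim.

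Since all the machinery has been set up in Lemma~\ref{count} and the clique hypothesis is exactly what is needed to identify $P_B$ with edges inside the parts, there is no real obstacle. The only subtle point to double-check is that when applying Lemma~\ref{count} one must work with the bipartite subgraph $B$ (rather than $G$ itself); otherwise the set $P$ could include pairs within the same part that have a common neighbor in the same part, which need not correspond to edges and would spoil the bound.
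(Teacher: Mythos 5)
Your proof is correct, but it takes a genuinely different route from the paper's. The paper proves Lemma~\ref{bipman} by a symmetrization ("entropy minimizing") argument: among all graphs satisfying the hypothesis it picks one maximizing $e(Z_0,Z_1) - e(Z_0) - e(Z_1)$ and, subject to that, the number of twin pairs; it then shows any two adjacent vertices in the same part must be twins, concludes that the extremal $G$ is a disjoint union of cliques, and verifies the inequality component by component. You instead apply Lemma~\ref{count} to the bipartite graph $B$ of crossing edges: a maximal matching $M$ in $B$ has $|M| \le \tfrac{1}{2}(|Z_0|+|Z_1|)$, the lemma gives $|P_B| \ge e(Z_0,Z_1) - |M|$, and since $B$ is bipartite every pair in $P_B$ lies inside one part and, by the clique hypothesis applied to a common neighbour, is an actual edge of $G[Z_0]$ or $G[Z_1]$, so $|P_B| \le e(Z_0)+e(Z_1)$. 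All steps check out, including the subtle point you flag yourself (one must work with $B$, not $G$, so that common neighbours force the pair into a single part). Your argument is shorter, reuses machinery the paper already sets up for its new proof of Mantel's Theorem, and thus unifies the two lemmas; the paper's argument is self-contained and additionally exposes the structure of the extremal configurations (disjoint unions of cliques), which is of independent interest but is not needed elsewhere in the paper.
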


\begin{proof}
We say that two vertices $z,z' \in Z_i$ ($i=0,1$ is the same for both $z$, $z'$) are \emph{twins} if they have the same closed neighbourhood, $N[z] = N[z']$.
Observe that being twins is an equivalence relation.  
Now we choose a graph $G$ so that
\begin{enumerate}[(1)]
  \item \label{opt1} $e(Z_0,Z_1) - e(Z_0) - e(Z_1)$ is maximum; and 
  \item \label{opt2} the total number of pairs of vertices that are twins is maximum (subj. to (\ref{opt1})).
    Recall that if a pair of vertices are twins, then they both are in~$Z_0$ or both in~$Z_1$. 
\end{enumerate}  
Observe that it is sufficient to verify the desired bound for this~$G$. 
Now we fix $i = 0,1$ and let $w',w'' \in Z_i$ be adjacent.
Consider the graph $G'$ ($G''$) obtained from $G$ by deleting $w'$ ($w''$) and adding a new vertex and making it a twin of $w''$ ($w'$).
It is immediate from this construction that both $G'$ and $G''$ satisfy the condition that 
$N(z) \cap Z_{1-j}$ is a clique for every $j = 0,1$ and $z \in Z_j$.  If one of $G'$ or $G''$ is superior to the
original graph $G$ for the first optimization criterion~(\ref{opt1}) we have a contradiction with the choice of~$G$.  It follows that all three graphs $G$, $G'$, and $G''$ are tied relative to
this criterion.  If $w'$ and $w''$ are not twins, then one of $G'$ or $G''$ is superior to $G$ relative to the second optimization criterion~(\ref{opt2}).  
To see this, observe that if $x, y \in Z_{1-i}$ are twins in~$G$, then they are also twins in~$G'$ and in~$G''$. 
If the twin-equivalence class of~$w'$ ($w''$) has $a'$ ($a''$ elements), then $G'$ looses $a'-1$ twin-pairs and gains $a''$ new ones; thus if 
$a'' \ge a'$, then $G'$ is superior to~$G$, a contradiction to the choice of~$G$. (If $a' > a''$, we use $G''$.) 
It follows that $w'$ and $w''$ must be twins. 
 
As being twins is an equivalence relation, we conclude that the graph~$G$ is a disjoint union of complete graphs.  
Consider a component $H$ of $G$ with $|V(H) \cap Z_0| = \ell$ and $|V(H) \cap Z_1| = m$.  In this case the sets $C = E(H) \cap E(Z_0,Z_1)$ and $D = E(H) \setminus C$ satisfy
  $$
   |D| - |C| = {\ell \choose 2} + {m \choose 2} - \ell m = \tfrac{1}{2}(\ell - m)^2 - \tfrac{\ell + m}{2} \ge -\tfrac{\ell + m}{2} 
  $$
and the lemma follows by summing these inequalities over each component.	
\end{proof}

Any counterexample to Theorem~\ref{main} would immediately imply the existence of large counterexamples by way of ``blowing up'' vertices.  More precisely, suppose that
$G_1, G_2, G_3$ contradict the theorem, and let $k$ be a positive integer.  Now replace every vertex $v$ by a set $X_v$ consisting of $k$ isolated vertices, and for each
graph $G_i$, replace every edge $uv$ by all possible edges between the sets $X_u$ and $X_v$.  This operation magnifies the number of vertices by a factor of $k$ and the
number of edges in each $G_i$ by a factor of $k^2$, and thus yields another counterexample.  
Moreover, if $\min_{1 \le i \le 3} \frac{|E(G_i)|}{n^2} - \frac{1+\tau^2}{4}  = \epsilon$ then this property will also be preserved.  So, the resulting graph 
on~$kn$ vertices will exceed the bound by $\epsilon k^2n^2$ edges ($\epsilon$ is positive, as $\tau^2$ is irrational). 
The condition in Theorem \ref{main} implies $|E(G_i)| + |E(G_j)| \ge \tfrac{1 + \tau^2}{2} n^2$ for all $1 \le i<j\le 3$. 
Hence, by the above observation, to prove Theorem~\ref{main} it suffices to establish the following result.

\begin{lemma}\label{main2}
Let $G_1, G_2, G_3$ be graphs on a common set $V$ of $n \ge 1$ vertices.  If 
\[ |E(G_i)| + |E(G_j)| \ge \tfrac{1 + \tau^2}{2} n^2 + \tfrac{3}{2}n \]
holds for every $1 \le i < j \le 3$, then there exists a rainbow triangle.
\end{lemma}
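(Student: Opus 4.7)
I would assume, for contradiction, that $G_1,G_2,G_3$ satisfy the hypothesis yet contain no rainbow triangle. The first step records the local obstruction: if $\{i,j,k\}=\{1,2,3\}$ and $uv\in E(G_k)$, then $N_{G_i}(u)\cap N_{G_j}(v)=\emptyset$, for any common neighbour $w$ would produce the rainbow triangle $u,w,v$. In particular, $d_{G_i}(u)+d_{G_j}(v)\le n$ for each such edge, a crude inequality already foreshadowing the quadratic we must eventually meet.

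The plan is to exploit this obstruction through the two tools featured in the new proof of Mantel's Theorem just given: the matching lemma (Lemma~\ref{count}) and the bipartite lemma (Lemma~\ref{bipman}). I would fix a maximum matching $M$ in a distinguished one of the three graphs, say $G_3$, and for each $uv\in M$ analyse the joint structure of the four neighbourhoods $N_{G_1}(u),N_{G_2}(u),N_{G_1}(v),N_{G_2}(v)$. The rainbow-free hypothesis forces two disjointness relations among these neighbourhoods, and on the vertices outside $V(M)$ it forces certain neighbourhoods to induce cliques in the appropriate graph --- the precise condition needed to apply Lemma~\ref{bipman} to a partition derived from the edge $uv$. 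Summing the resulting bipartite inequalities over $M$ should give an upper bound on $|E(G_1)|+|E(G_2)|$, which I would then pair with a matching-based upper bound on $|E(G_3)|$ obtained by applying Lemma~\ref{count} to an auxiliary graph whose extremal configuration is a disjoint union of cliques (matching the tripartition $A\cup B\cup C$ of the sharpness example, in which $G_1\cup G_2$ is exactly a disjoint union of three cliques).

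The main obstacle I expect is fitting the bipartite partition $(Z_0,Z_1)$ of Lemma~\ref{bipman} to the extremal tripartition $(A,B,C)$: the hypothesis that $N(z)\cap Z_{1-i}$ is a clique for every vertex is not automatic, and will likely demand either a minimal-counterexample reduction (deleting a carefully chosen vertex and invoking induction on $n$) or a judicious choice of which matching edge of $M$ witnesses the partition. Assuming this step can be arranged, the final move is the optimisation: combining the two bounds should produce a quadratic inequality in a single parameter $t\in(0,\tfrac12)$ --- playing the role of $|B|/n=|C|/n$ in the extremal example --- whose critical value is the positive root of $9t^2-8t+1=0$, namely $t=\tau=\tfrac{4-\sqrt7}{9}$. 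At that value both sides meet at $1+\tau^2$, which is exactly the constant encoded in the target bound $\tfrac{1+\tau^2}{2}n^2+\tfrac{3}{2}n$, with the additive $\tfrac{3}{2}n$ slack consumed by the $\tfrac{n}{2}$ error terms supplied by Lemmas~\ref{count} and~\ref{bipman} together with the vertices paired off in $M$.
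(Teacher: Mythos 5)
Your proposal is a plan rather than a proof, and the sentence ``assuming this step can be arranged'' sits exactly where the substance of the argument lives. The local obstruction you record is correct, but the clique hypothesis of Lemma~\ref{bipman} genuinely fails for the partitions you propose: if $uv \in E(G_3)$ and $w, w' \in N_{G_1}(u)$, nothing in the rainbow-free hypothesis forces $ww'$ to be an edge of anything, and neither a judicious choice of matching edge nor a single-vertex deletion repairs this. The paper never applies Lemma~\ref{bipman} to a vertex partition of the original graphs at all. Instead it builds a maximum collection $M$ of pairwise disjoint \emph{digons} (pairs covered by exactly two colours), splits $M$ into three classes $M_{i,j}$ according to which two colours appear, and applies Lemma~\ref{bipman} to an auxiliary graph $H$ whose \emph{vertices are the digons} in $M_{i,j} \cup M_{i,k}$; the clique condition for $H$ is then verified by a case analysis (Observation~\ref{digonobs}) resting on Lemma~\ref{no3pm}, which forbids any proper vertex subset on which all three graphs have perfect matchings. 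That lemma --- absent from your sketch --- is where the additive slack $\tfrac{3}{2}n$ and the pairwise form of the hypothesis are actually consumed, via a deletion-and-induction step on the minimal counterexample; it is not absorbed by the $\tfrac{n}{2}$ error terms of Lemmas~\ref{count} and~\ref{bipman} as you suggest. (Lemma~\ref{count} in fact plays no role in the proof of this lemma; it is used only for the standalone proof of Mantel's Theorem.)

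Two further points. Distinguishing $G_3$ and taking a maximum matching there runs against the extremal configuration itself: in that example $G_3$ has no edges inside $A$, so a maximum $G_3$-matching leaves $A$ uncovered and your summation over $M$ says nothing about the many $G_1$- and $G_2$-edges inside $A$; the symmetric digon decomposition is needed precisely to recover the tripartition $A \cup B \cup C$ as $X_{1,2} \cup X_{1,3} \cup X_{2,3}$ together with a residual set $D$. And the endgame is not a one-parameter quadratic: the paper introduces four densities $a, b, c, d$ for these parts, derives three independent inequalities (one from colours $2$ and $3$ alone, one from a weighted $2{:}2{:}3$ edge count, one from the total count), and shows their joint infeasibility through a further chain of estimates; collapsing everything to a single $t$ with critical value the root of $9t^2 - 8t + 1 = 0$ would only reproduce the sharpness computation from the introduction, not a contradiction. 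So while you have correctly identified the relevant lemma, the relevant constant, and the shape of the extremal example, the decomposition your argument is built on is the wrong one, and the step you defer is the one that requires the new ideas.
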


The statement of the above lemma replaces a bound on the number of edges in each graph $G_i$ by a bound on the sum of the number of edges in any two such graphs.
This adjustment will allow us to forbid certain types of induced subgraphs of a possible minimal counterexample, as demonstrated by Lemma \ref{no3pm} below.

To proceed, we need some further notation. For the remainder of this article we
will be focusing on the proof of Lemma~\ref{main2}, so we will always have three
graphs $G_1, G_2, G_3$ on a common set of vertices $V$.  Abbreviating our usual
notation, if $X \subseteq V$ and $1 \le i \le 3$ we will let $e_i( X) =
e_{G_i}(X)$ and we define $e(X) = e_1(X) + e_2(X) + e_3(X)$.  Similarly, if $Y
\subseteq V$ is disjoint from $X$, then we let $e_i(X,Y) = e_{G_i}(X,Y)$ and we
define $e(X,Y) = e_1(X,Y) + e_2(X,Y) + e_3(X,Y)$. We also let $E_i = E(G_i)$. 

\begin{lemma}\label{no3pm}
A counterexample to Lemma~\ref{main2} with $n$ minimum does not contain a nonempty proper subset of vertices $X$ for which $G_i[X]$ has a perfect matching for all $1 \le i \le 3$.
\end{lemma}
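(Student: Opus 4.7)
The plan is to restrict the three graphs to $Y := V \setminus X$ and show that this yields a strictly smaller counterexample to Lemma~\ref{main2}. Assume for contradiction that such an $X$ exists in a minimum counterexample; set $x = |X| \ge 1$, $y = |Y| \ge 1$, and let $M_1, M_2, M_3$ be perfect matchings in $G_1[X], G_2[X], G_3[X]$, respectively. Since $G_1[Y], G_2[Y], G_3[Y]$ inherit the absence of a rainbow triangle, it suffices to verify the edge-sum condition
\[ e_i(Y) + e_j(Y) \ge \frac{1+\tau^2}{2}\, y^2 + \frac{3}{2}\, y \]
for every $1 \le i < j \le 3$; minimality of our counterexample will then deliver the contradiction.

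Fix such a pair $i<j$ and let $k$ be the remaining index. The leverage comes from the matching $M_k$ via the following single-vertex bound: for every $ab \in M_k$ and every $u \in V \setminus \{a,b\}$,
\[ e_i(\{a,b\}, \{u\}) + e_j(\{a,b\}, \{u\}) \le 2. \]
This follows from a short case analysis, since the two orientations of a rainbow triangle on $\{a,b,u\}$ that use $ab$ as the $G_k$-edge are both forbidden, ruling out the configurations $\{au \in G_i,\, bu \in G_j\}$ and $\{bu \in G_i,\, au \in G_j\}$. I sum this inequality in two ways. Summing first over $u \in Y$ and then over $ab \in M_k$ (which partitions $X$) yields $e_i(X,Y) + e_j(X,Y) \le xy$. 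Summing instead over $u \in X \setminus \{a,b\}$ and then over $ab \in M_k$ gives a total of at most $x(x-2)$ for a double-count in which every non-$M_k$ edge of $G_i[X] \cup G_j[X]$ appears twice while every $M_k$-edge appears zero times; since at most $|M_k| = x/2$ edges of each of $G_i, G_j$ lie in $M_k$, this simplifies to $e_i(X) + e_j(X) \le \frac{x(x-2)}{2} + x = \frac{x^2}{2}$.

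Combining these two bounds with the hypothesis $e_i(V) + e_j(V) \ge \frac{1+\tau^2}{2} n^2 + \frac{3}{2} n$ and substituting $n = x + y$, a direct calculation gives
\[ e_i(Y) + e_j(Y) \ge \frac{1+\tau^2}{2}\, y^2 + \frac{3}{2}\, y + \frac{\tau^2}{2}\, x^2 + \tau^2 x y + \frac{3}{2}\, x, \]
and the last three terms are strictly positive because $x \ge 1$. Hence $(G_1[Y], G_2[Y], G_3[Y])$ satisfies the hypothesis of Lemma~\ref{main2} on only $y < n$ vertices and has no rainbow triangle, contradicting minimality. The only real work in the argument is the single-vertex inequality; the remainder is careful double-count bookkeeping. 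Note that the reduction uses \emph{all three} matchings, since for each pair $\{i,j\}$ the matching $M_k$ in the third color is what forbids the two relevant rainbow-triangle orientations.
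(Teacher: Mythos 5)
Your proof is correct and follows essentially the same route as the paper: for each colour pair $\{i,j\}$ you use the perfect matching in the third colour $k$, derive the single-vertex bound $e_i(\{a,b\},\{u\})+e_j(\{a,b\},\{u\})\le 2$ from the absence of rainbow triangles, and sum it to get $e_i(X,Y)+e_j(X,Y)\le xy$ and $e_i(X)+e_j(X)\le x^2/2$ before verifying that the restriction to $Y$ is a smaller counterexample. The only (cosmetic) difference is that you obtain the internal bound by double-counting over single vertices rather than over pairs of matching edges, and you write out the final inequality with the explicit positive surplus $\frac{\tau^2}{2}x^2+\tau^2xy+\frac32 x$, which the paper leaves implicit.
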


\begin{proof}
  Let $G_1$, $G_2$, $G_3$ be a minimal counterexample to Lemma~\ref{main2}. Suppose (for a contradiction) that there is a set of vertices $X$, such that every colour induces a graph  with a perfect matching on $X$.  Let $|X| = \ell$ and let $M$ be a perfect matching in the graph
  $G_3[X]$.  If $xx' \in M$ and $y \in V \setminus \{x,x'\}$, then $e_1(y, \{x,x' \} ) + e_2(y, \{x,x'\}) \le 2$ (otherwise there would be a rainbow triangle).  
  Summing this over all edges of $M$ and $y \in V \setminus X$ gives us $e_1(X, V \setminus X) + e_2(X, V \setminus X) \le \ell(n- \ell)$.  

If $uu', vv' \in M$ and $uv \in E_1\cap E_2$, then  $uv', vu' \not \in E_1 \cup E_2$. 
This implies that $e_1(\{u,u'\}, \{v,v'\}) + e_2(\{u,u'\}, \{v,v'\}) \le 4$ and thus, 
on average, an edge in $E(X)\setminus M$ contributes at most $1$
to $e_1(X) + e_2(X)$; obviously an edge of $M$ contributes at most~$2$. 
Hence $e_1(X) + e_2(X) \le \frac{\ell}{2} + {\ell \choose 2} = \frac{\ell^2}{2}$.
Therefore
\begin{align*}
|E(G_1 - X)| + |E(G_2-X)| 
  &\ge |E(G_1)| + |E(G_2)| - \ell (n-\ell) - \tfrac{\ell^2}{2} \\
	&\ge \tfrac{1+\tau^2}{2}n^2 + \tfrac{3}{2}n  - \ell n + \tfrac{\ell^2}{2} \\
	&\ge \tfrac{1+\tau^2}{2} (n-\ell)^2 +  \tfrac{3}{2}(n - \ell).
\end{align*} 
It follows from the same argument applied to the other two pairs of colours that the graphs $G_1-X$, $G_2-X$, and $G_3-X$ form a smaller counterexample, contradicting minimality.
\end{proof}

\begin{figure}[ht]
  \centering
  \includegraphics[width=310pt]{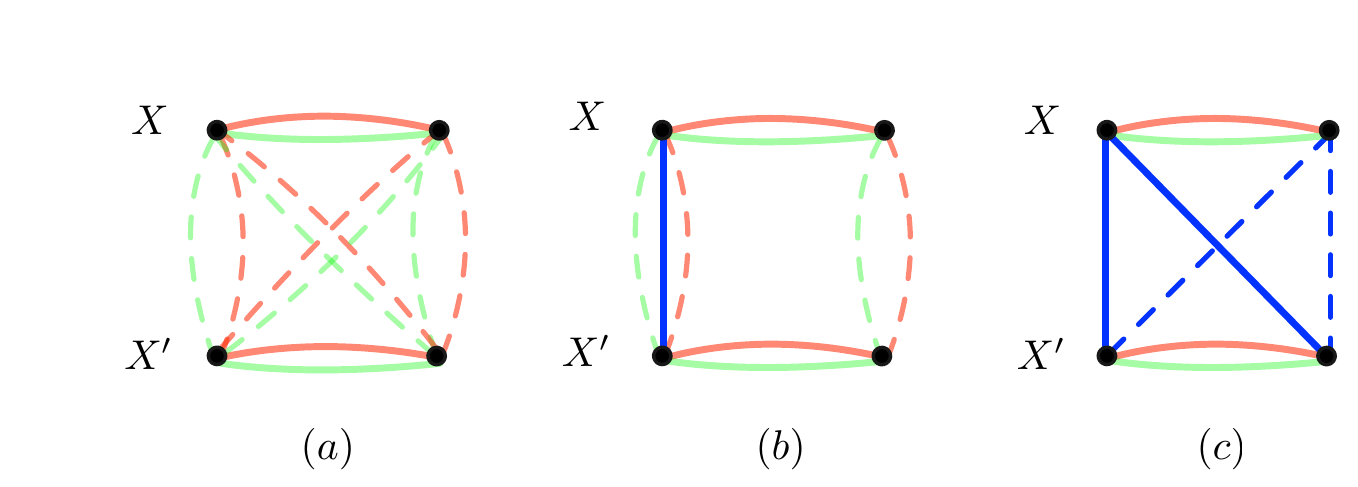}
  \caption{The three situations described in the first instance of Observation \ref{digonobs}. Colors $i, j$ and $k$ are depicted by green, red and blue respectively.
  If an edge is not depicted it means that the edge is not present, if it is dashed it indicates that it may be present.}
  \label{fig:Obs1}
\end{figure}

\begin{figure}[ht]
  \centering
  \includegraphics[width=260pt]{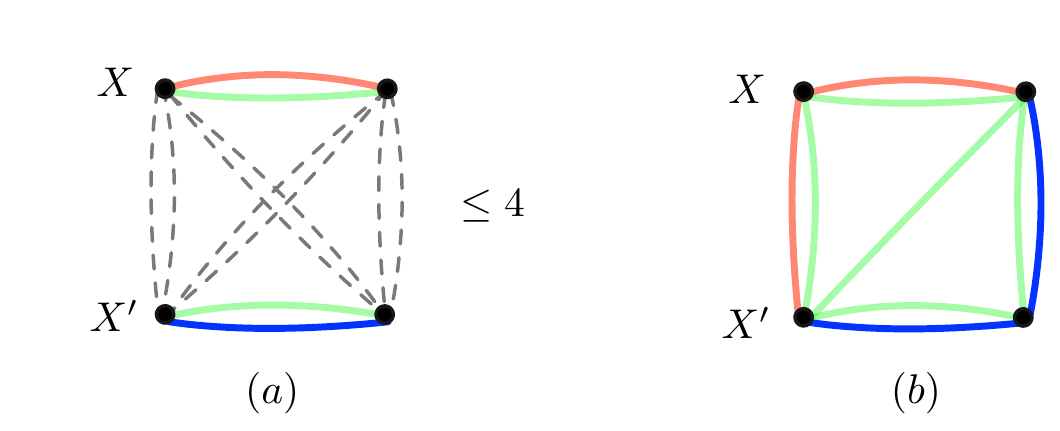}
  \caption{The two situations described in the second instance of Observation \ref{digonobs}. Colors $i$, $j$ and $k$ are depicted by green, red and blue respectively.
    If an edge is not depicted it means that the edge is not present, if it is dashed  (gray) it indicates that it may be present in any color (as soon as no rainbow triangle appears).}
  \label{fig:Obs2}
\end{figure}

\begin{observation}
\label{digonobs}
Let $G_1, G_2, G_3$ be a counterexample to Lemma~\ref{main2} for which $n$ is minimum and let $X, X' \subseteq V$ be disjoint sets with $|X| = |X'| = 2$. 
Suppose also that $n \ge 5$. 
If $e(X) = e(X') = 2$ and $\{i,j,k\} = \{1,2,3\}$ then we have:
\begin{enumerate}
\item If $e_i(X) = e_j(X) = e_i(X') = e_j(X') = 1$, one of the following holds (see Figure~\ref{fig:Obs1}):
\begin{enumerate}
\item $e_{k}(X,X') = 0$, or
\item $e_{k}(X,X') = 1$ and $e_{i}(X,X'), e_{j}(X,X') \le 2$, or
\item $e_{k}(X,X') = 2$ and $e_{i}(X,X') = e_{j}(X,X') = 0$.
\end{enumerate}
\item If $e_i(X) = e_j(X) = e_i(X') = e_k(X') = 1$, one of the following holds (see Figure~\ref{fig:Obs2}):
\begin{enumerate}
\item $e(X,X') \le 4$, or
\item $e(X,X') = 5$ where $e_{i}(X,X') = 3$ and $e_{j}(X,X') = e_{k}(X,X') = 1$.
\end{enumerate}
\end{enumerate}
\end{observation}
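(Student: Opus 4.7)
The plan is to do a direct case analysis on the colour subsets of the four cross-edges $ac, ad, bc, bd$, writing $X = \{a, b\}$ and $X' = \{c, d\}$. The basic tool is the no-rainbow-triangle condition restated as follows: if a triangle has one edge with colour set $\{p, q\}$, then the triangle is rainbow if and only if, among its two other edges, one lies in $E_r$ (where $r$ is the third colour) and the other lies in $E_p \cup E_q$. Alongside this I will apply Lemma~\ref{no3pm} to the set $X \cup X'$: whenever the configuration forces each of $G_1, G_2, G_3$ to admit a perfect matching on $X \cup X'$, we reach a contradiction, since $n \ge 5$ makes $X \cup X'$ a proper subset of $V$.

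In Case~1 we have $ab, cd \in E_i \cap E_j$, so each of the four triangles $\{a,b,c\}, \{a,b,d\}, \{a,c,d\}, \{b,c,d\}$ yields the same symmetric restriction: if a cross-edge $e$ lies in $E_k$, then both cross-edges sharing an endpoint with $e$ must be disjoint from $E_i \cup E_j$. I would split on $e_k(X, X')$. Value $0$ gives (a) immediately. Value $1$, after renaming so the colour-$k$ cross-edge is $ac$, forces $bc = ad = \emptyset$ while $bd$ may contain $i$ and/or $j$ but not $k$, producing (b). Value $2$ splits into two subcases: if the two colour-$k$ cross-edges share an endpoint, the symmetric restriction propagates and forces every cross-edge to be contained in $\{k\}$, giving (c); if they form a perfect matching of $X \cup X'$, then together with the matching $\{ab, cd\}$, which is a perfect matching in both $E_i$ and $E_j$, they violate Lemma~\ref{no3pm}. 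Any value of $e_k(X, X')$ at least~$3$ automatically contains a perfect matching of $X \cup X'$ in $E_k$, again ruled out by Lemma~\ref{no3pm}.

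In Case~2 we have $ab \in E_i \cap E_j$ but $cd \in E_i \cap E_k$, so the restrictions are asymmetric: the triangles through $ab$ forbid pairing colour $k$ with any colour in $\{i, j\}$ across cross-edges sharing an $X$-endpoint, while the triangles through $cd$ forbid pairing colour $j$ with any colour in $\{i, k\}$ across cross-edges sharing an $X'$-endpoint. I would enumerate the allowed colour subsets of the four cross-edges vertex-by-vertex under these constraints and tabulate $e(X, X')$. Most configurations satisfy $e(X, X') \le 4$; the few remaining near-extremal configurations either produce simultaneous perfect matchings on $X \cup X'$ in all three colours (excluded by Lemma~\ref{no3pm}), or else match the unique pattern $e_i(X, X') = 3$, $e_j(X, X') = e_k(X, X') = 1$ described in (b), realised, for instance, by $ac = \{i, j\}$, $bc = \{i\}$, $ad = \emptyset$, $bd = \{i, k\}$. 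The main obstacle will be the Case~2 bookkeeping, since each cross-edge has up to eight possible colour subsets; the cleanest route seems to be eliminating forbidden pairs vertex-by-vertex using the asymmetric restriction and discharging the near-extremal configurations with Lemma~\ref{no3pm}.
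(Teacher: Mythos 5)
Your overall strategy --- a case analysis driven by the no-rainbow-triangle condition together with Lemma~\ref{no3pm} --- is exactly the paper's approach, and your treatment of part~1 is complete and correct: the paper likewise rules out two nonadjacent colour-$k$ cross-edges by observing that they would combine with the two digon edges (each in $E_i\cap E_j$) to give perfect matchings of $X\cup X'$ in all three colours. The gap is in Case~2, where you restrict your use of Lemma~\ref{no3pm} to the four-element set $X\cup X'$. That is not enough: the lemma must also be applied to two-element sets, i.e., you need the fact (which the paper invokes explicitly at the start of its proof of part~2) that no pair of vertices is adjacent in all three colours. Without it your enumeration cannot close. Concretely, take $ab\in E_i\cap E_j$, $cd\in E_i\cap E_k$, $ac\in E_i\cap E_j\cap E_k$, $bd\in E_i\cap E_j$, and $ad=bc=\emptyset$. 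Every triangle on $\{a,b,c,d\}$ is missing the edge $ad$ or the edge $bc$, so there is no rainbow triangle; and $G_k[X\cup X']$ contains only the two adjacent edges $cd$ and $ac$, hence has no perfect matching, so your four-vertex application of Lemma~\ref{no3pm} does not fire. Yet this configuration has $e(X,X')=5$ with $e_i(X,X')=e_j(X,X')=2$ and $e_k(X,X')=1$, contradicting conclusion 2(b). The only thing excluding it is the triple-coloured edge $ac$, i.e., Lemma~\ref{no3pm} applied to the set $\{a,c\}$ (legitimate since $n\ge 5>2$).

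Once you add this consequence to your toolkit --- so that every cross-edge carries at most two colours --- the vertex-by-vertex elimination you describe should go through and your exhibited extremal configuration for 2(b) is indeed valid. One further small correction to the bookkeeping: the triangles through $ab$ constrain the pairs of cross-edges $\{ac,bc\}$ and $\{ad,bd\}$, which share an endpoint in $X'$, while the triangles through $cd$ constrain the pairs $\{ac,ad\}$ and $\{bc,bd\}$, which share an endpoint in $X$; you have these two roles swapped in your description of the asymmetric restrictions.
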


\begin{proof}
The first part follows from the observation that the graph induced on $X \cup X'$
  cannot have two nonadjacent edges of colour $k$ (otherwise Lemma~\ref{no3pm} would be violated) and a straightforward case analysis (leaning on the assumption that there is no rainbow triangle).  

To show the second part, recall that  Lemma~\ref{no3pm} implies there is no pair of vertices adjacent in
all three colors. We make a similar case analysis, obtaining that the  only configuration with  $e(X,X') = 5$ is the one depicted in Figure \ref{fig:Obs2} $(b)$.
\end{proof}

\begin{proof}[Proof of Lemma \ref{main2}]
Suppose (for a contradiction) that Lemma~\ref{main2} is false, and choose a counterexample 
$G_1, G_2, G_3$ with common vertex set $V$ so that $n = |V|$ is minimum.  
  It follows that $n > 5$ (otherwise the given bound on~$|E(G_i)| + |E(G_j)|$ is greater than $2\binom n2$). 
  Recall that by Lemma~\ref{no3pm} 
there does not exist a pair of vertices adjacent in all three graphs $G_1, G_2, G_3$.  Say that a set $X \subseteq V$ with $|X|=2$ is a \emph{digon} if $e(X) = 2$.  Now, choose a maximum sized collection $M$ of pairwise disjoint digons.  For $1 \le i < j \le 3$ define $M_{i,j}$ to be the subset of $M$ consisting of those digons $X$ so that $e_i(X) = e_j(X) = 1$.  For every $1 \le i < j \le 3$ let $X_{i,j}$ be the union of the digons in $M_{i,j}$ and let $D = V \setminus (X_{1,2} \cup X_{1,3} \cup X_{2,3})$.  
 
\begin{claim} 
The set $D$ satisfies the following.
\begin{enumerate}
  \item If $x,x' \in D$ then $e(x,x') \le 1$. 
  \item If $X \in M_{i,j}$ and $y \in D$ satisfy $e(X,y) \ge 3$, then $e_k(X,y) = 0$ (with $\{i,j,k\} = \{1,2,3\}$). 
  \item For every $X \in M$ there is at most one vertex $y \in D$ for which $e(X,y) = 4$.  
\end{enumerate}
\end{claim}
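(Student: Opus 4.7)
The plan is to deduce all three items by combining the maximality of the digon collection $M$ with Lemma~\ref{no3pm} and the absence of rainbow triangles.

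For (1), if $x, x' \in D$ had $e(x,x') \ge 2$, there are two possibilities. If $e(x,x') = 3$, then $xx'$ lies in all three $E_i$, so the two-vertex set $\{x,x'\}$ is a proper nonempty subset having a perfect matching in every $G_i$, violating Lemma~\ref{no3pm}. If $e(x,x')=2$, then $\{x,x'\}$ is itself a digon, and since $x, x' \in D$ it is disjoint from every digon in $M$, so $M \cup \{\{x,x'\}\}$ is a larger disjoint family of digons, contradicting the maximality of $M$. This part should be essentially immediate.

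For (2), write $X = \{u,v\}$ with $uv \in E_i \cap E_j$, and assume for contradiction that some edge from $X$ to $y$ uses colour $k$, say $uy \in E_k$. Then on the triangle $\{u,v,y\}$, the edge $vy$ cannot lie in $E_i$ (else pairing with $uv \in E_j$ and $uy \in E_k$ gives a rainbow triangle) nor in $E_j$ (by the symmetric argument), so $vy$ can only possibly belong to $E_k$, giving $e(v,y) \le 1$. If moreover $uy$ also lies in a second colour, say $uy \in E_i$, then $vy \in E_k$ together with $uv \in E_j$ and $uy \in E_i$ is also a rainbow triangle, forcing $e(v,y) = 0$ in that subcase. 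In every subcase $e(X,y) = e(u,y) + e(v,y) \le 2$, contradicting $e(X,y) \ge 3$.

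For (3), suppose distinct vertices $y, y' \in D$ both satisfy $e(X,y) = e(X,y') = 4$. Since Lemma~\ref{no3pm} gives $e(u,z), e(v,z) \le 2$ for every $z$, this forces $e(u,y) = e(v,y) = e(u,y') = e(v,y') = 2$. By (2), colour $k$ is absent between $X$ and each of $y, y'$, so all four edges $uy, vy, uy', vy'$ lie in $E_i \cap E_j$. Hence $\{u,y\}$ and $\{v,y'\}$ are each themselves digons; they are disjoint from one another (the vertices $u,v,y,y'$ are four distinct vertices, since $y, y' \in D$ avoid $X$ and $y \ne y'$) and disjoint from every other digon in $M$ (since $y, y' \in D$). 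Replacing $X$ by these two new digons produces a strictly larger disjoint family of digons inside $V$, contradicting the maximality of $M$. The main obstacle I anticipate is (2), where the propagation of a single colour-$k$ edge across the digon into a near-total prohibition on edges from the other endpoint of $X$ to $y$ requires attentive case analysis; once (2) is in hand, (3) follows by the clean swap argument exploiting the forced $E_i \cap E_j$ structure on both sides of $X$.
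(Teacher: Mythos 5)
Your proposal is correct and follows essentially the same route as the paper: part (1) from the maximality of $M$ (plus the exclusion of triply-coloured pairs), part (2) by the rainbow-triangle case analysis showing a colour-$k$ edge from $X$ to $y$ forces $e(X,y)\le 2$, and part (3) by splitting $X$ into the two digons $\{u,y\}$ and $\{v,y'\}$ to contradict maximality. Your write-up of (2) is in fact slightly more complete than the paper's one-line version, as it explicitly handles the subcase where $uy$ carries a second colour.
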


\begin{proof}[Proof of Claim~1]
The first part follows from the maximality of $M$.
For the second part, if $X=\{x,x'\}$ and, say, $xy \in E_k$ then $x'y \notin E_i \cup E_j$, thus $e(X,y) \le 2$.  
For the last part, assume again $X=\{x,x'\} \in M_{i,j}$. For contradiction, suppose there are distinct $y,y' \in D$ 
such that $e(X,y) = e(X,y') = 4$. As no edge is contained in all three subgraphs, both $\{x,y\}$ and $\{x',y'\}$ 
are digons, so we may use them in~$M$ instead of $\{x,x'\}$, contradicting the maximality of $M$.
\end{proof}

\bigskip

  The plan for the rest of the proof is to use our understanding of the structure of minimal counterexample 
  (as given by Observation~\ref{digonobs} and by Claim~1) to 
  derive several inequalities for $|X_{1,2}|$, $|X_{2,3}|$, $|X_{1,3}|$ and~$|D|$. These inequalities will appear 
  as (1)--(3) below. As we will show they have no solution, we will reach our desired contradiction. 

In order to simplify calculations we now replace the graphs $G_i$ by graphs having simpler structure (and possibly several 
rainbow triangles). 
First off, for every $X \in M$, if there exists $y \in D$ with $e(X,y) = 4$, then delete one edge between $X$ and $y$.  (Note that
by the above claim this removes at most $\frac{n}{2}$ edges in total.)    
Next suppose that $\{i,j,k\} = \{1,2,3\}$ and $X,X' \in M_{i,j}$ satisfy $e_{k}(X,X') > 0$.  
In this case we delete all edges between $X$ and $X'$ (in all three graphs) and then add
back three edges between $X$ and $X'$ in $G_i$ and three such edges in $G_j$.  
Note that for this operation the first part of Observation~\ref{digonobs} implies that the sum of the edge-count 
for any two of the graphs does not decrease.  
Let $G_1', G_2', G_3'$ be the graphs resulting from applying these operations whenever possible. By the above, 
\[ 
  \min_{1 \le i < j \le 3} |E(G'_i)| + |E(G'_j)| \ge \tfrac{1+\tau^2}{2}n^2 + n. 
\]
Note that the sets $M$, $M_{i,j}$, $X_{i,j}$ and $D$ do not change by going to $G'_1$, $G'_2$, $G'_3$. 
The graphs $G'_1, G'_2,G'_3$ may have a rainbow triangle. However, each such triangle involves an edge between 
two digons $X$, $X'$ in the same set $M_{i,j}$ for which $e_{G'_i}(X,X') = e_{G'_j}(X,X') = 3$. 

Before making our next modification, we pause to construct an auxiliary graph.  For every $1 \le i \le 3$ let $\{i,j,k\} = \{1,2,3\}$ and construct a simple graph $H$ with vertex set $M_{i,j} \cup M_{i,k}$ by the following rules:
\begin{itemize}
  \item If $X,X' \in M_{i,j}$ are distinct, we add an edge between them if $e_{G_j'}(X,X') \le 3$. 
    We do the same with $k$ in place of~$j$. 
  \item If $X \in M_{i,j}$ and $X' \in M_{i,k}$, we add an edge between them if $e'(X,X') = 5$
    (using $e' = e_{G_1'} + e_{G_2'} + e_{G_3'}$). 
\end{itemize}
\begin{claim} The graph $H$ satisfies the hypothesis of Lemma~\ref{bipman}
  with  $Z_0 = M_{i,j}$ and~$Z_1 = M_{i,k}$.
\end{claim}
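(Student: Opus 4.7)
The plan is to verify the two cliquishness conditions directly, using Observation~\ref{digonobs}(2) to pin down the edge pattern between any two $H$-adjacent digons from different parts, and then using the no-rainbow-triangle property of the \emph{original} graphs $G_1, G_2, G_3$ to rule out the single bad configuration.

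First I would fix $X \in M_{i,j}$ and two $H$-neighbours $Y, Y' \in M_{i,k}$, so that $e'(X,Y) = e'(X,Y') = 5$. Since neither of the pairs $(X,Y)$, $(X,Y')$ lies between two digons in the same $M_{a,b}$, their edge-counts are untouched by our two modifications, so $e(X,Y) = e(X,Y') = 5$ in the original graphs as well. Observation~\ref{digonobs}(2) then forces
\[ e_i(X,Y) = 3, \quad e_j(X,Y) = e_k(X,Y) = 1, \]
and likewise for $(X,Y')$. The goal is to show that $e_{G'_k}(Y,Y') \le 3$, i.e.\ that $YY' \in E(H)$.

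The key case split is on whether the pair $(Y,Y')$ was affected by the second modification. Since $Y,Y' \in M_{i,k}$, the ``third colour'' for this pair is $j$. If $e_j(Y,Y') > 0$ originally, the modification leaves exactly three $k$-edges between $Y$ and $Y'$, so $e_{G'_k}(Y,Y') = 3$ and we are done. Otherwise no modification took place between $Y$ and $Y'$, hence $e_{G'_k}(Y,Y') = e_k(Y,Y')$; I would then assume toward contradiction that $e_k(Y,Y') = 4$, i.e.\ that all four pairs between $Y$ and $Y'$ are $k$-edges of $G_k$. Let $xy$ (with $x \in X$, $y \in Y$) be the unique $j$-edge between $X$ and $Y$. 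Since $e_i(X,Y') = 3$, some $z \in Y'$ satisfies $xz \in E_i$, and by assumption $yz \in E_k$. The triangle on $\{x,y,z\}$ then uses one edge of each colour $i,j,k$, and a quick check shows that for any permutation $(i,j,k)$ of $(1,2,3)$ a suitable cyclic ordering of $x,y,z$ realises the rainbow triangle condition, contradicting our standing hypothesis on the original graphs.

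Interchanging the roles of $j$ and $k$ (i.e.\ starting from $Y \in M_{i,k}$ with two neighbours in $M_{i,j}$) yields the symmetric statement that $N_H(Y) \cap M_{i,j}$ is a clique for every $Y \in M_{i,k}$. The main obstacle, in my view, is not the combinatorics but the bookkeeping: one must verify carefully that each of the three edges producing the forbidden rainbow triangle is untouched by the modifications, which is precisely why the case $e_j(Y,Y') > 0$ has to be separated out and resolved directly from the construction of the $G'_\ell$ rather than inside the contradiction argument.
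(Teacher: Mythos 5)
Your proof is correct and follows essentially the same route as the paper: you use the fact that cross-pairs between $M_{i,j}$ and $M_{i,k}$ are untouched by the modifications so that Observation~\ref{digonobs}(2)(b) pins down the edge pattern, and then split on whether the same-class pair was rearranged (giving $e_{G'_k}=3$ outright) or not (giving $e_k(Y,Y')=4$ in the original graphs and hence a rainbow triangle). The only difference is that you exhibit the rainbow triangle explicitly from the counts $e_j(X,Y)=1$, $e_i(X,Y')=3$, $e_k(Y,Y')=4$, where the paper simply points to the configuration in Figure~\ref{fig:Obs2}; your version is, if anything, a little more self-contained.
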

\begin{proof}[Proof of Claim~2]
For the sake of contradiction assume that there are digons $X,X' \in M_{i,k}$ and $Y \in M_{i,j}$ such that 
both $YX$ and $YX'$ are edges of~$H$, but $XX'$ is not. 
It follows that $e_{G'_k}(X,X') = 4$, so the edges between $X$ and~$X'$ have not been modified when constructing graphs $G'_1$, $G'_2$, $G'_3$. 
Moreover, as $e'(X,Y) = e'(X',Y) = 5$, the second part of Observation \ref{digonobs} describes precisely the structure of edges 
between $X \cup X'$ and $Y$, and we find a rainbow triangle (not only in~$G'$, but also in~$G$). 
\end{proof}

By Lemma~\ref{bipman} we have $e_H(Z_0) + e_H(Z_1)\ge e_H(Z_0,Z_1)-(|M_{i,j}|+|M_{i,k}|)/2$. 
Note that by the definition of $H$, $e_H(Z_0)$ is at most the number of missing edges of colour $j$ in $X_{i,j}$, 
and $ e_H(Z_1)$ is at most the number of missing edges  of colour~$k$ in~$X_{i,k}$.
Also, $e_H(Z_0,Z_1)$ is the number of pairs $(X,X')$ where $X \in M_{i,j}$ and $X' \in M_{i,k}$ that satisfy $e(X,X') = 5$.

Based on this observation, we now construct a new triple of graphs. 
For each $i = 0, 1, 2$ and for $j<k$ such that $\{i,j,k\}=\{0,1,2\}$ we modify 
the induced subgraphs of $G_1'$, $G_2'$, $G_3'$ on $X_{i,j} \cup X_{i,k}$: 

{
  \narrower
For every $X \in M_{i,j}$ and $X' \in M_{i,k}$ we modify the graph between $X$ and $X'$ as follows:
If $e(X,X') = 5$ (so, by Lemma~\ref{bipman}, $e_i(X,X') = 3$ and $e_j(X,X') = 1 = e_k(X,X')$) 
then delete the edges between $X$ and $X'$ of colours $j$ and $k$ and then add back one new edge of colour $j$ or $k$ (to be chosen later) so that the new edge is
not  parallel to any of the three edges of colour~$i$.  Otherwise, if $e(X,X') \le 4$ we rearrange the edges between $X$ and $X'$ so that every $x \in X$ and $x' \in X'$ satisfy $e(x,x') \le 1$.
Next, we add to $X_{i,j}$ all missing edges of colour~$j$ and to $X_{i,k}$ all missing edges of colour~$k$. 
\def\loss{\mathop{\mathrm{loss}}}
We let $\loss(j,i)$ denote the decrease of the number of edges of~$G_j'$ (thus $\loss(j,i)$ is negative, if the number of edges of~$G_j'$ has increased). 
Similarly for $k$ in place of~$j$. 
It follows from Lemma~\ref{bipman} and the above discussion that $\loss(j,i) + \loss(k,i) \le (|M_{i,j}| + |M_{i,k}|)/2 \le n/4$. 
Moreover, we can choose whether to decrease the number of edges of~$G_j'$ or $G_k'$. Thus, we may ensure that 
$\loss(j,i)<0$ only if $\loss(k,i)\le 0$ (and vice versa). Consequently, we have 
$$
  \loss(j,i) \le n/4  \quad \mbox{and} \quad \loss(k,i) \le n/4. 
$$

}

Summing up, we may arrange the modification process so that each colour class of edges decreases in size by at most $\frac{n}{2}$.  
So, if we let $G''_1, G''_2, G''_3$ be the graphs resulting from our operation, we have:
\[
   \min_{1 \le i < j \le 3} |E(G''_i)| + |E(G''_j)| >  \tfrac{1+\tau^2}{2}n^2.
\]

To complete the proof, we will now show that the above density condition is incompatible with the structure of the graphs $G''_i$.  
Let $G''=\bigcup_{i\le 3}G''_i$. Below we use the notation $e(S)$, $e(S,S')$, $e_i(S)$ for the parameters in the graph $G''$. 
The construction of the graphs $G''_i$ implies:

\begin{claim}
\begin{enumerate}
  \item The subgraph of $G''$ induced on $X_{i,j}$ is complete in colours $i$ and $j$ and empty in the remaining colour.
  \item If $x \in X_{i,j}$ and $x' \in X_{i,k}$ where $j \neq k$, then $e(x,x') \le 1$.
  \item If $x,x' \in D$, then $e(x,x') \le 1$.
  \item \label{denstod} If $X \in M_{i,j}$ and $y \in D$ then $e(X,y) \le 3$ and if $e(X,y) = 3$, then all edges between $X$ and $y$ have colour $i$ or colour $j$.
\end{enumerate}
\end{claim}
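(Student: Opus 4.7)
The plan is to verify each of the four parts by tracing through the two rounds of modifications that produced $G''$ from $G$, invoking Claim~1 and Observation~\ref{digonobs} where appropriate.

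Parts~(3) and~(4) are the quickest. For~(3), no modification touches an edge with both endpoints in $D$, so the statement follows directly from part~(1) of Claim~1. For~(4), the first modification in the construction of $G'$ explicitly deletes one edge for each pair $(X,y) \in M \times D$ with $e(X,y) = 4$; by part~(3) of Claim~1 at most one such~$y$ exists for each~$X$, so the deletion is well-defined. Subsequent modifications touch neither edges within $D$ nor edges joining a digon to $D$, so $e(X,y) \le 3$ in $G''$. Moreover, if $e(X,y) = 3$ in $G''$ then the pre-modification count $e(X,y)$ was at least~$3$, and part~(2) of Claim~1 forces $e_k(X,y) = 0$; the deleted edge (if any) therefore also has colour $i$ or~$j$, so all three surviving edges have colour~$i$ or colour~$j$.

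For part~(2), fix $X \in M_{i,j}$ and $X' \in M_{i,k}$ with $x \in X$ and $x' \in X'$. The cross-edges between $X$ and~$X'$ are modified in exactly one iteration of the second round, namely the one indexed by the common colour~$i$. If $e(X,X') \le 4$ the construction explicitly rearranges edges so that $e(x,x') \le 1$ for every pair in $X \times X'$. If $e(X,X') = 5$, the second part of Observation~\ref{digonobs} forces the three colour-$i$ edges to occupy three of the four possible pairs in $X \times X'$, and the single added edge (of colour $j$ or~$k$) falls on the fourth pair; once more $e(x,x') \le 1$ for every pair.

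Part~(1) is the most delicate and will be the main obstacle. Completeness of $X_{i,j}$ in colour~$j$ comes from the second-round iteration indexed by the common colour~$i$, and completeness in colour~$i$ from the iteration indexed by~$j$; together these two iterations insert every missing colour-$i$ or colour-$j$ edge inside $X_{i,j}$. The harder half is emptiness in the third colour~$k$. A single digon in $M_{i,j}$ carries no colour-$k$ edge by definition. Between two distinct digons $X, X' \in M_{i,j}$, the first part of Observation~\ref{digonobs} rules out $e_k(X,X') \ge 3$, and whenever $e_k(X,X') \ge 1$ the first modification in the construction of $G'$ deletes every edge between $X$ and~$X'$ and re-adds only colour-$i$ and colour-$j$ edges. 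No later modification introduces a colour-$k$ edge inside $X_{i,j}$, since the remaining steps either add colour-$i$ or colour-$j$ edges within $X_{i,j}$ or adjust cross-edges to some other $X_{i',j'}$. The bookkeeping---checking that no later step undoes an earlier structural gain---is where most of the care lies.
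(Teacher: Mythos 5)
Your proof is correct and matches the paper's approach: the paper simply asserts that this claim ``follows from the construction of the $G''_i$,'' and your part-by-part trace through the two rounds of modifications (using parts (1)--(3) of Claim~1 and Observation~\ref{digonobs} exactly where the paper intends) is precisely the omitted verification. The only blemish is a labelling slip --- the step that deletes all edges between two digons $X,X'\in M_{i,j}$ with $e_k(X,X')>0$ is the \emph{second} modification in the construction of $G'$, not the first --- which does not affect the argument.
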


\def\dd{\mathop{\mathtt{d}}}

Let $a$, $b$, $c$, $d$ be such that $an = |X_{1,2}|$, $bn = |X_{1,3}|$, $cn= |X_{2,3}|$, and $dn = |D|$.  
We shall assume (without loss of generality) that $a \ge b \ge c$, and we note that $a+b+c+d = 1$.
Next, we will apply our density bounds to get some inequalities relating $a$, $b$, $c$, and $d$.  
For the purposes of these calculations, it is convenient to introduce a density function.  
For any graph $H$ we define $\dd(H) = \frac{ 2 |E(H)| }{ |V(H)|^2 }$.  Note that with this terminology we have

\[ \min_{1 \le i < j \le 3} \dd(G''_i) + \dd(G''_j) > 1 + \tau^2. \]

First we consider just colours 2 and 3.  Note that if $x,y \in V$ are adjacent in both $G_2''$ and $G_3''$, 
then either $x,y \in X_{2,3}$ or one of these vertices is in $X_{2,3}$ and the other is in $D$.
Furthermore, in this last case, if say $y \in D$ and $x \in X_{2,3}$ has $X = \{x,x'\} \in M_{2,3}$, then $e_2(X, y) + e_3(X,y) \le 3$.  
It follows from this that $|E(G_2'')| + |E(G_3'')| \le {n \choose 2} + {cn \choose 2} + \frac{1}{2} cdn^2 \le \frac{n^2}{2} + c^2 \frac{n^2}2 + cd \frac{n^2}{2}$.  
Multiplying this equation through by $\frac{2}{n^2}$ then gives the useful bound
\begin{equation}
\label{011}
  c^2 + cd \ge \dd(G_2'') + \dd(G_3'') - 1 \ge \tau^2. 
\end{equation}
Next, we will count edges of~$G_1''$ twice, edges of~$G_2''$ twice, and edges of~$G_3''$ three times. 
An edge within~$X_{1,2}$ is counted four times in total, edge within $X_{1,3}$ or within $X_{2,3}$ five times in total. 
Edge between $X_{1,2}$ and $X_{2,3}$ (etc.) at most three times. 
Finally, for $y \in D$ and $X \in M_{2,3}$, we count the two edges between $y$ and $X$ at most 
$2+3+3$ times, thus we have at most $4 |D| |X_{2,3}|$ edges here. The same count applies for $M_{1,3}$ in place of~$M_{2,3}$; 
for $M_{1,2}$ we get at most $3 |D| |X_{1,2}|$. This implies 
$$
  2 |E(G_1'')| + 2|E(G_2'')| + 3 |E(G_3'')| -3 {n \choose 2} \le {an \choose 2} + 2 {bn \choose 2} + 2 {cn \choose 2} + bdn^2 + cdn^2, 
$$
giving us the bound
\begin{equation*}
  a^2 + 2b^2 + 2c^2 +2bd + 2cd \ge 2 \dd(G_1'') + 2 \dd(G_2'') + 3 \dd(G_3'') - 3. 
\end{equation*}
We can express the right hand side as 
$\tfrac12 (\dd(G_1'')+\dd(G_2'')) +\tfrac32 (\dd(G_1'')+\dd(G_3'')) +\tfrac32 (\dd(G_2'')+\dd(G_3'')) -3$
and use the lower bound for each of the sum of two densities, yielding 
\begin{equation}
\label{223}
  a^2 + 2b^2 + 2c^2 +2bd + 2cd \ge \tfrac{1}{2} + \tfrac{7}{2}\tau^2. 
\end{equation}

Finally, $|E(G_1'')| + |E(G_2'')| + |E(G_3'')| - {n \choose 2} \le {an \choose 2} + {bn \choose 2} + {cn \choose 2} + \frac{1}{2}(a+b+c)dn^2$ gives us the inequality
(strict one, as $\tau^2$ is irrational) 
\begin{equation}
\label{111}
  a^2 + b^2 + c^2 + d(a+b+c) > \tfrac{1}{2} + \tfrac{3}{2}\tau^2.
\end{equation}

We claim that there do not exist nonnegative real numbers $a,b,c,d$ with $a \ge b \ge c$ and $a+b+c+d = 1$ satisfying the inequalities (\ref{011}), (\ref{223}), and (\ref{111}).  To prove this, first note that inequality (\ref{011}) (and the quadratic formula) imply
\begin{equation}
\label{easycl} 
  c \ge \frac{-d + \sqrt{d^2 + 4\tau^2}}{2}.
\end{equation}

Now $b \ge c$ and inequality (\ref{easycl}) imply $b + c + d \ge \sqrt{d^2 + 4\tau^2}$.  This gives us the following useful upper bound on $a$
\begin{equation}
\label{aupper}
  a \le 1 - \sqrt{d^2 + 4\tau^2} \le 1 - 2\tau.  
\end{equation}
To get a lower bound on $a$, observe that $a \ge b \ge c$ and inequality (\ref{111}) give us
$\frac{1}{2} + \frac{3}{2}\tau^2 < a^2 + b^2 + c^2 + d(1-d) \le 3a^2 + \frac{1}{4}$.  It follows that $a \ge \sqrt{ \frac{1}{12} + \frac{1}{2}\tau^2 } \ge 2\tau$.  Combining this lower bound on $a$ with the upper bound (\ref{aupper}) gives the following useful inequality
\[ a^2 + (1-a)^2 \le 1 - 4\tau + 8\tau^2. \]
The above bound together with (\ref{223}) implies
\[\tfrac{1}{2} + \tfrac{7}{2}\tau^2 \le a^2 + (1-a)^2 + b^2 + c^2 - 2bc - d^2 \le 1 -4\tau + 8\tau^2  + (b-c)^2 - d^2. \]
However, $\tau$ satisfies the equation $\tfrac{1}{2} + \tfrac{7}{2}\tau^2 = 1 -4\tau + 8\tau^2$ so the above simplifies to give 
\begin{equation}
\label{bmc}
  b - c \ge d.
\end{equation}
Note that since $b \ge d$ we have $1 \ge a + b+d \ge 3d$ and thus $d \le \frac{1}{3}$.  
At this point we have $c \ge \frac{-d + \sqrt{d^2 + 4\tau^2}}{2}$ and $b \ge \frac{d + \sqrt{d^2 + 4\tau^2}}{2}$  and we will show that this contradicts (\ref{111}).  To see this, note that under the assumption $a \ge b \ge c$ and $a+b+c = 1-d$ the quantity $a^2 + b^2 + c^2$ is maximized when $b$ and $c$ are as small as possible and $a$ is as large as possible.  Thus
\begin{align*}
 \tfrac{1}{2} + \tfrac{3}{2}\tau^2
 	&< a^2 + b^2 + c^2 + d(1-d)	\\
	&\le (1 - d - \sqrt{d^2 + 4\tau^2} )^2 + \tfrac{\left( d + \sqrt{d^2 + 4\tau^2} \right)^2}{4} +   \tfrac{\left( -d + \sqrt{d^2 + 4\tau^2} \right)^2}{4} + d(1-d)	\\
	&= 1 + 2d^2 - d + 6\tau^2 -2(1-d) \sqrt{d^2 + 4\tau^2}
\end{align*}
Using the identity $\tfrac{1}{2} + \tfrac{9}{2}\tau^2 = 4\tau$ and rearranging gives us
\begin{equation}
\label{lastineq}
  2(1-d) \sqrt{d^2 + 4\tau^2} < 4\tau + 2d^2 - d.
\end{equation}
The above equation immediately implies $d > 0$. 
From here a straightforward calculation gives the contradiction $d > \frac{1 - 2\tau^2 + \sqrt{ (1 - 2\tau^2)^2 +16(1 - 23\tau^2) } }{8} \doteq 0.485 > \frac{1}{3}$.  
(To check this by hand, square both sides of (\ref{lastineq}) and observe that the left and right sides are degree 4 polynomials in $d$ with matching highest order terms and matching constants; 
cancel these terms, divide by $d$, apply the quadratic formula and use the fact $9\tau^2-8\tau+1=0$ for simplification.) 
\end{proof}

\bibliographystyle{amsplain}

\begin{aicauthors}
\begin{authorinfo}[ra]
  Ron Aharoni\\
  Department of Mathematics, Technion -- Israel Institute of Technology,\\
  Technion City, Haifa 3200003, Israel\\
  ra\imageat{}technion\imagedot{}ac\imagedot{}il\\ 
  \url{https://raharoni.net.technion.ac.il/}
\end{authorinfo}
\begin{authorinfo}[md]
  Matt DeVos\\
  Deparment of Mathematics, Simon Fraser University\\
  Burnaby, Canada\\
  mdevos\imageat{}sfu\imagedot{}ca\\
  \url{http://www.sfu.ca/~mdevos/}
\end{authorinfo}
\begin{authorinfo}[sghm]
  Sebasti\'an Gonz\'alez Hermosillo de la Maza\\
  Deparment of Mathematics, Simon Fraser University\\
  Burnaby, Canada\\
  sga89\imageat{}sfu\imagedot{}ca
\end{authorinfo}
\begin{authorinfo}[am]
  Amanda Montejano\\
  UMDI Facultad de Ciencias, UNAM Juriquilla\\
  Quer\'{e}taro, Mexico\\
  amandamontejano\imageat{}ciencias\imagedot{}unam\imagedot{}mx\\
  \url{https://sites.google.com/site/amandamontejanohomepage/home}
\end{authorinfo}
\begin{authorinfo}[rs]
  Robert \v{S}\'amal\\
  Computer Science Institute, Charles University\\
  Prague, Czech Republic\\
  samal\imageat{}iuuk\imagedot{}mff\imagedot{}cuni\imagedot{}cz\\
  \url{https://iuuk.mff.cuni.cz/~samal/}
\end{authorinfo}
\end{aicauthors}

\end{document}